\documentclass[12pt]{article}
\usepackage{amsmath, amsthm, amscd, amsfonts, amssymb, graphicx}
\usepackage{titles}
\usepackage{algorithm}
\usepackage{algpseudocode}
\usepackage{subfigure} 
 \usepackage{caption}
 \usepackage{graphics,graphicx}

\usepackage{epsfig}
\usepackage{color}
\usepackage{fancyhdr}
\usepackage{titlesec}
\usepackage{tikz}
\usepackage{afterpage}
\usetikzlibrary{arrows}
\usetikzlibrary{decorations.markings}
\tikzstyle{block}=[draw opacity=0.7,line width=1.4cm]
\tikzset{
  big black arrow/.style={
    decoration={markings,mark=at position 1 with {\arrow[scale=2.5,black]{>}}},
    postaction={decorate},
    shorten >=0.4pt},
    line/.style={draw, ->}}

\setlength{\textheight}{225mm} \setlength{\textwidth}{165mm}
\setlength{\oddsidemargin}{1.25mm}
\setlength{\evensidemargin}{1.25mm} 
\setlength{\topmargin}{-1cm}

\setbox0=\hbox{$+$}
\newdimen\plusheight
\plusheight=\ht0
\def\+{\;\lower\plusheight\hbox{$+$}\;}

\setbox0=\hbox{$-$}
\newdimen\minusheight
\minusheight=\ht0
\def\-{\;\lower\minusheight\hbox{$-$}\;}

\setbox0=\hbox{$\cdots$}
\newdimen\cdotsheight
\cdotsheight=\plusheight
\def\cds{\lower\cdotsheight\hbox{$\cdots$}}

\renewcommand{\(}{\left\(}
\renewcommand{\)}{\right\)}

\numberwithin{equation}{section}
\theoremstyle{plain}
\newtheorem{theorem}{Theorem}[section]

\newtheorem{corollary}[theorem]{Corollary}

\newtheorem{remarks}[theorem]{Remark}

\usepackage{pdfpages}

\begin{document}
\begin{center}{\bf Some restricted partition functions in terms of $2$-adic valuation}\end{center}\begin{center}	
		\footnotesize{\bf Sabi Biswas and Nipen Saikia$^{\ast}$}\\
						Department of Mathematics, Rajiv Gandhi
			University,\\ Rono Hills, Doimukh-791112, Arunachal Pradesh, India.\\
				E. Mail(s): sabi.biswas@rgu.ac.in; nipennak@yahoo.com\\
				$^\ast$\textit{Corresponding author}.\end{center}\vskip2mm
				
				\noindent {\bf Abstract:} The $2$-adic valuation of an integer $n$ which is the exponent of the highest power of $2$ that divides $n$. In this paper, we give representations of certain restricted partition functions in terms of $2$-adic valuation.  
				\vskip 3mm
						\noindent  {\bf Keywords and phrases:} $2$-adic valuation, restricted partition functions.
						\vskip 3mm
						\noindent  {\bf 2020 Mathematical Subject Classification:} 11P82, 11P84, 05A10.	
	
\section{Introduction}
For $n\geq1$, partition of a positive integer $n$ is defined as a finite sequence of positive integers $(\omega_1, \omega_2, ..., \omega_k)$ such that $\sum_{j=1}^k\omega_j=n;\quad \omega_j\ge \omega_{j+1},$ where $\omega_j$ are called parts or summands of the partition. Let $p(n)$ denote the number of partitions of $n$ with $p(0)=1$. For example,  $p(3)=3$  and the partitions are 
$3,\quad  2+1$ and $1+1+1.$ 

If the order of integers $\omega_j$ does not matter, then $\sum_{j=1}^k\omega_j=n$ is known as an integer partition and can be rewritten as $$n=t_1+2t_2+...+nt_n,$$ where each positive integer $j$ appears $t_j$ times in the partition. Then the number of parts of this partition is given by $$t_1+t_2+...+t_n=k.$$For example,
\begin{align*}3
&=0\cdot 1+0\cdot 2+1\cdot 3\\
&=1\cdot 1+1\cdot 2+0\cdot 3\\
&=3\cdot 1+0\cdot 2+0\cdot 3.
\end{align*}
The generating function for the partition function $p(n)$ \cite{LE} is given by
\begin{equation*}
\sum_{n=0}^{\infty}p(n)q^{n} = \frac{1}{(q;q)_{\infty}},
\end{equation*}where for any complex numbers $L$ and $q$ with $|q|<1$, $(L;q)_{\infty}$ can be defined as
\begin{equation*}
(L;q)_{\infty} = \prod_{s=0}^{\infty}(1-Lq^s),\quad \text{where} \quad  (L;q)_{0} = 1\quad \text{and}\quad (L;q)_{n} =\prod_{s=0}^{n-1}(1-Lq^s), \quad  n\ge1.
\end{equation*}
The $2$-adic valuation of an integer $n$ is the highest power of $2$ dividing $n$. Let $v_2(n)$ denote the $2$-adic valuation of an integer $n$, then 
\begin{equation*}
v_2(n):=max\{k\in \mathbb{N}_0:2^k| n\},\quad v_2(0)=\infty,
\end{equation*}where $\mathbb{N}_0$ denotes the set of natural numbers (including zero) and $2^k| n$ denotes divisibility of $n$ by $2^k$.
Recently, Merca~\cite{M, M1} established some results on Euler's partition function $p(n)$ and overpartition function $\overline{p}(n)$ in terms of $v_2(n)$, where
\begin{equation*}
\sum_{n=0}^{\infty}\overline p(n)q^{n} = \frac{(-q;q)_\infty}{(q;q)_{\infty}}.
\end{equation*} 
 In this paper, we prove that some restricted partition functions can be expressed in terms of $2$-adic valuation. Specifically, we give  representations of  overpartitions into odd parts, partitions where even parts are distinct and odd parts may be repeated, partitions into distinct parts, partitions into distinct odd parts where even parts may be repeated and partitions into even parts in terms of $v_2(n)$.

\section{Representations of partition functions in terms of $v_2(n)$ }
In this section, we derive alternative representations for the generating functions of some restricted partition functions with the help of $2$-adic valuation. 

\begin{theorem}\label{th1}Let for any positive integer $n$ and $|q|<1$, $\bar{p}_o(n)$ denote  the number of overpartitions of an positive integer $n$ into odd parts with the generating function given by
	$$ \sum_{n=0}^{\infty}\bar{p}_o(n)q^n=\dfrac{(-q;q^2)_\infty}{(q;q^2)_\infty},\quad \bar{p}_o(0)=1.$$
Then  
\begin{equation}\label{1}
\sum_{n=0}^{\infty}\bar{p}_o(n)q^n=\prod_{n=1}^{\infty}(1+q^n)^{v(n)},
\end{equation}
 where $$ v(n)=\left\{
 \begin{array}{cc}
1+v_2(4n-2), \quad
  if~ n=2k-1, k=1,2,3..., \\
 v_2(4n-2), \quad if~ n=2k, k=1,2,3....
 \end{array}
 \right.$$
\end{theorem}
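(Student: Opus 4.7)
The plan is to manipulate the generating function
\begin{equation*}
\sum_{n=0}^{\infty}\bar{p}_o(n)q^n = \frac{(-q;q^2)_\infty}{(q;q^2)_\infty}
\end{equation*}
so that every factor on the right-hand side has the shape $1+q^m$, and then simply read off the exponent with which each positive integer $m$ appears.

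The key ingredient I would use is Euler's classical theorem that partitions into distinct parts are equinumerous with partitions into odd parts, or equivalently the product identity $(-q;q)_\infty = 1/(q;q^2)_\infty$. Substituting this for the denominator yields
\begin{equation*}
\sum_{n=0}^{\infty}\bar{p}_o(n)q^n = (-q;q^2)_\infty \cdot (-q;q)_\infty = \prod_{m\text{ odd}}(1+q^m)\cdot\prod_{n\ge 1}(1+q^n),
\end{equation*}
and regrouping the two products according to the parity of the exponent of $q$ produces
\begin{equation*}
\sum_{n=0}^{\infty}\bar{p}_o(n)q^n = \prod_{m\text{ odd}}(1+q^m)^{2}\prod_{m\text{ even}}(1+q^m).
\end{equation*}
As an alternative route that makes the $2$-adic valuation appear more visibly, one could instead apply the telescoping identity $\tfrac{1}{1-x}=\prod_{k\ge 0}(1+x^{2^k})$ with $x=q^{2n-1}$ to each denominator factor and then invoke the unique decomposition of any positive integer as odd part times a power of $2$; the same grouped product drops out.

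It then remains only to check that these exponents agree with $v(n)$ as defined in the statement. Since $4n-2 = 2(2n-1)$ and $2n-1$ is odd, $v_2(4n-2)=1$ for every positive integer $n$, so the piecewise definition collapses to $v(n)=2$ for odd $n$ and $v(n)=1$ for even $n$, which matches the exponents above.

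The argument presents no serious obstacle; it is essentially a one-line reformulation of Euler's theorem. The mildly subtle point, and the only place where care is needed in reading the theorem, is noticing that the apparently elaborate expression $v_2(4n-2)$ is in fact identically equal to $1$, so that $v(n)$ takes only the two values $1$ and $2$.
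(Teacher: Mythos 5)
Your proof is correct. The ``alternative route'' you sketch at the end is in fact exactly the paper's own argument: the authors apply the telescoping identity $\tfrac{1}{1-q}=\prod_{k\ge 0}(1+q^{2^k})$ with $q$ replaced by $q^{2n-1}$ to each factor of $1/(q;q^2)_\infty$ and then collect the factors $1+q^m$ via the unique decomposition $m=2^k(2n-1)$; your primary route simply packages that computation as Euler's identity $1/(q;q^2)_\infty=(-q;q)_\infty$, so the two arguments are equivalent. The one thing you add is the explicit observation that $v_2(4n-2)=v_2\bigl(2(2n-1)\bigr)=1$ identically, so that $v(n)$ equals $2$ on odd $n$ and $1$ on even $n$; the paper never states this simplification (presumably to keep the notation parallel to Merca's results for $p(n)$ and $\overline{p}(n)$, where the analogous exponents such as $v_2(2n)$ are genuinely nonconstant), but it is consistent with the paper's numerical examples and makes the actual content of the theorem more transparent.
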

\begin{proof}
Consider the identity
\begin{equation}\label{o1}
\dfrac{1}{(1-q)}=\prod_{k=0}^{\infty}(1+q^{2^k}),\quad |q|<1.
\end{equation}Replacing $q$ by $q^{2n-1}$ in \eqref{o1}, we obtain
\begin{equation}\label{o2}
\dfrac{1}{(1-q^{2n-1})}=\prod_{k=0}^{\infty}(1+q^{2^k\cdot (2n-1)}).
\end{equation}Now, the generating function of $\bar{p}_o(n)$ can be rewritten as
\begin{align*}\sum_{n=0}^{\infty}\bar{p}_o(n)q^n
&=\prod_{n=1}^{\infty}\dfrac{1+q^{2n-1}}{1-q^{2n-1}}\notag\\
&=\prod_{n=1}^{\infty}(1+q^{2n-1})\prod_{k=0}^{\infty}(1+q^{2^k\cdot (2n-1)})\notag\\
&=\prod_{n=1}^{\infty}(1+q^n)^{v(n)}.
\end{align*} \end{proof}

\begin{corollary}\label{th2}
Let $n$ be a positive integer, then we have
\begin{equation*}
\bar{p}_o(n)=\sum_{\substack{t_1+2t_2+...+nt_n=n\\ t_k\leq v(k)}}^{}\binom{v(1)}{t_1}\binom{v(2)}{t_2}...\binom{v(n)}{t_n},
\end{equation*} where $v(n)$ is defined as in equation \eqref{1} of Theorem~\ref{th1}.
\end{corollary}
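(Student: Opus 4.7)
The plan is to derive the corollary as a direct coefficient-extraction consequence of Theorem~\ref{th1}. Starting from the identity
\[
\sum_{n=0}^{\infty}\bar{p}_o(n)q^n=\prod_{n=1}^{\infty}(1+q^n)^{v(n)},
\]
the key observation is that each exponent $v(n)$ is a finite non-negative integer, so the binomial theorem applies to each factor. First I would expand
\[
(1+q^n)^{v(n)}=\sum_{t_n=0}^{v(n)}\binom{v(n)}{t_n}q^{nt_n},
\]
which truncates automatically at $t_n=v(n)$, i.e.\ the constraint $t_n\leq v(n)$ appears naturally.

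Next, I would take the product of these expanded factors over $n\geq 1$. Since each factor is a polynomial in $q$ with constant term $1$, the infinite product is well-defined as a formal power series, and interchanging the product and the (finite, for each $q$-degree) sum gives
\[
\prod_{n=1}^{\infty}(1+q^n)^{v(n)}=\sum_{(t_1,t_2,\ldots)}\binom{v(1)}{t_1}\binom{v(2)}{t_2}\cdots\, q^{t_1+2t_2+3t_3+\cdots},
\]
where the sum runs over all sequences of non-negative integers $(t_1,t_2,\ldots)$ with only finitely many nonzero entries and with $t_k\leq v(k)$ for each $k$.

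Finally, I would compare coefficients of $q^n$ on both sides. Since contributions to $q^n$ come exactly from sequences satisfying $t_1+2t_2+\cdots+nt_n=n$ (with necessarily $t_k=0$ for $k>n$), extracting $[q^n]$ yields precisely
\[
\bar{p}_o(n)=\sum_{\substack{t_1+2t_2+\cdots+nt_n=n\\ t_k\leq v(k)}}\binom{v(1)}{t_1}\binom{v(2)}{t_2}\cdots\binom{v(n)}{t_n},
\]
which is the desired identity.

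There is no real obstacle here; the argument is essentially bookkeeping. The only point requiring a small remark is the validity of interchanging the product and sums, but this is standard for formal power series since the coefficient of $q^n$ depends on only finitely many factors $(1+q^k)^{v(k)}$, namely those with $k\leq n$.
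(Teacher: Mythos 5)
Your argument is correct and is essentially identical to the paper's proof: the authors likewise expand each factor $(1+q^n)^{v(n)}$ by the binomial theorem, multiply the resulting series (Cauchy product), and compare coefficients of $q^n$. Your added remark on why the interchange is valid as a formal power series manipulation is a minor refinement of the same route.
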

\begin{proof}By employing binomial expansion, equation \eqref{1} of Theorem~\ref{th1} can be rewritten as
\begin{equation}\label{o4}
\sum_{n=0}^{\infty}\bar{p}_o(n)q^n=\prod_{n=1}^{\infty}\left(\sum_{j=0}^{v(n)}\binom{v(n)}{j}q^{j\cdot n}\right).
\end{equation}By applying Cauchy multiplication of power series, the product of the right hand side of \eqref{o4} can be written as
\begin{equation}\label{o5}
\sum_{n=0}^{\infty}\bar{p}_o(n)q^n=\sum_{n=0}^{\infty}q^n\sum_{t_1+2t_2+...+nt_n=n}^{}\prod_{j=1}^{n}\binom{v(j)}{t_j}.
\end{equation}Hence, the proof easily follows from \eqref{o5} on comparing the coefficients of $q^n$.
\end{proof}

\begin{remarks}
The summation on the right hand side of Corollary~\ref{th2} contains all partitions of $n$ except the terms that contains $t_k>v(k)$ as the  binomial coefficient $\binom{v(k)}{t_k}$ becomes zero for $t_k>v(k)$. As an illustration, the partitions of $5$ in which each part $k$ has the multiplicity at most $v(k)$ are
$$5,\quad 4+1,\quad 3+2,\quad 3+1+1.$$
Therefore, by Corollary~\ref{th2}, we have
\begin{align*}
\bar{p}_o(5)
&=\binom{v(5)}{1}+\binom{v(4)}{1}\binom{v(1)}{1}+\binom{v(3)}{1}\binom{v(2)}{1}+\binom{v(3)}{1}\binom{v(1)}{2}\\
&=\binom{2}{1}+\binom{1}{1}\binom{2}{1}+\binom{2}{1}\binom{1}{1}+\binom{2}{1}\binom{2}{2}\\
&=2+2+2+2=8.
\end{align*}\end{remarks}

\begin{theorem}\label{th3}Let for any positive integer $n$ and $|q|<1$, $ped(n)$ denote the number of partitions of an positive integer $n$ into distinct even parts where odd parts may be repeated and defined by
$$\sum_{n=0}^{\infty} ped(n)q^n=\dfrac{(-q^2;q^2)_\infty}{(q;q^2)_\infty},\quad ped(0)=1.$$ Then
\begin{equation}\label{2}
\sum_{n=0}^{\infty}ped(n)q^n=\prod_{n=1}^{\infty}(1+q^n)^{v(n)},
\end{equation}
 where $$ v(n)=\left\{
 \begin{array}{cc}
1+v_2(4n-2), \quad
  if~ n=2k, k=1,2,3..., \\
 v_2(4n-2), \quad if~ n=2k-1, k=1,2,3....
 \end{array}
 \right.$$
\end{theorem}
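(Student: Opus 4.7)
The plan is to mirror the proof of Theorem~\ref{th1} step by step, the only change being that the extra factor in the numerator of the generating function is now $(-q^2;q^2)_\infty$ rather than $(-q;q^2)_\infty$. First I would rewrite the generating function as
$$\sum_{n=0}^{\infty} ped(n)q^n = \prod_{n=1}^{\infty}\frac{1+q^{2n}}{1-q^{2n-1}},$$
and then expand each denominator by the dyadic identity \eqref{o2}, which yields
$$\sum_{n=0}^{\infty} ped(n)q^n = \prod_{n=1}^{\infty}(1+q^{2n})\,\prod_{n=1}^{\infty}\prod_{k=0}^{\infty}\bigl(1+q^{2^k(2n-1)}\bigr).$$

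The only substantive task is then a bookkeeping step: for each positive integer $m$, count the total multiplicity with which the factor $(1+q^m)$ appears on the right. The first product contributes $1$ precisely when $m$ is even and $0$ otherwise. For the double product, writing $m = 2^a\cdot b$ with $b$ odd, the equation $2^k(2n-1)=m$ has the unique solution $k=a$, $n=(b+1)/2$, so this piece contributes $1$ for every $m\geq 1$. Adding these contributions yields multiplicity $1$ if $m$ is odd and $2$ if $m$ is even.

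It remains to check that this matches $v(n)$ as defined in the statement. Since $4n-2=2(2n-1)$ and $2n-1$ is always odd, $v_2(4n-2)=1$ for every $n\geq 1$; hence $v(n)=v_2(4n-2)=1$ when $n$ is odd and $v(n)=1+v_2(4n-2)=2$ when $n$ is even, which is exactly the count obtained above. I do not foresee a genuine obstacle, since the argument is a structural analogue of the proof of Theorem~\ref{th1}; the only point that deserves care in the write-up is the uniqueness of the factorisation $m=2^a\cdot b$ with $b$ odd, which is what pins down the contribution of the double product to be exactly one for each~$m$.
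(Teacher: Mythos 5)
Your proposal is correct and follows essentially the same route as the paper: rewrite the generating function as $\prod_{n\ge1}(1+q^{2n})/(1-q^{2n-1})$, expand each $(1-q^{2n-1})^{-1}$ by the dyadic identity, and collect the exponents of $(1+q^m)$. The only difference is that you make explicit the multiplicity count (and the observation that $v_2(4n-2)=1$ identically), which the paper leaves implicit in its final equality.
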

\begin{proof}
The generating function of $ped(n)$ can be rewritten as
\begin{align*}\sum_{n=0}^{\infty}ped(n)q^n
&=\prod_{n=1}^{\infty}\dfrac{1+q^{2n}}{1-q^{2n-1}}\notag\\
&=\prod_{n=1}^{\infty}(1+q^{2n})\prod_{k=0}^{\infty}(1+q^{2^k\cdot (2n-1)})=\prod_{n=1}^{\infty}(1+q^n)^{v(n)}.
\end{align*}
\end{proof}

\begin{corollary}\label{thm1}
Let $n$ be a positive integer, then we have
\begin{equation*}
ped(n)=\sum_{\substack{t_1+2t_2+...+nt_n=n\\ t_k\leq v(k)}}^{}\binom{v(1)}{t_1}\binom{v(2)}{t_2}...\binom{v(n)}{t_n},
\end{equation*} where $v(n)$ is defined as in equation \eqref{2} of Theorem~\ref{th3}.
\end{corollary}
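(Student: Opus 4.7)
The plan is to mirror the proof of Corollary~\ref{th2} verbatim, with Theorem~\ref{th3} playing the role that Theorem~\ref{th1} played there. The only substantive change is that the exponent function $v(n)$ now refers to the one defined in \eqref{2}; the combinatorial/algebraic manipulations are insensitive to which of the two $v$'s is used.

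First I would invoke Theorem~\ref{th3} to rewrite the generating function as the product $\prod_{n=1}^{\infty}(1+q^n)^{v(n)}$. Next, for each fixed $n$, I would expand $(1+q^n)^{v(n)}$ by the binomial theorem to obtain
\begin{equation*}
\sum_{n=0}^{\infty} ped(n)q^n=\prod_{n=1}^{\infty}\left(\sum_{j=0}^{v(n)}\binom{v(n)}{j}q^{jn}\right).
\end{equation*}
Then I would apply the Cauchy product formula for power series: choosing an exponent $t_j\in\{0,1,\dots,v(j)\}$ from each factor contributes $q^{\sum_j j t_j}$ with weight $\prod_j \binom{v(j)}{t_j}$, so collecting terms according to the total degree $n=\sum_j j t_j$ yields
\begin{equation*}
\sum_{n=0}^{\infty} ped(n)q^n = \sum_{n=0}^{\infty} q^n \sum_{t_1+2t_2+\cdots+nt_n=n} \prod_{j=1}^{n}\binom{v(j)}{t_j}.
\end{equation*}
Finally I would compare coefficients of $q^n$ on both sides to conclude, noting that the constraint $t_k\le v(k)$ in the stated sum is automatic because $\binom{v(k)}{t_k}=0$ whenever $t_k>v(k)$ (so those terms may be included or excluded at will).

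There is essentially no obstacle: the argument is formal and parallels the proof of Corollary~\ref{th2} step by step. The only point requiring mild care is checking that the Cauchy product is well defined, i.e.\ that for each fixed $n$ only finitely many tuples $(t_1,\dots,t_n)$ contribute, which is immediate since $t_j\ge 0$ and $\sum_j jt_j=n$ forces $t_j=0$ for $j>n$ and bounds each $t_j$ by $n$.
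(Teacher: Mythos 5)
Your proposal is correct and is exactly the argument the paper intends: the paper's proof of this corollary simply says to proceed as in Corollary~\ref{th2}, which is the binomial-expansion plus Cauchy-product plus coefficient-comparison argument you spelled out. Your added remark that the constraint $t_k\le v(k)$ is automatic because the binomial coefficient vanishes otherwise matches the paper's own remark following Corollary~\ref{th2}.
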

\begin{proof}
Proceeding in the same way as in Corollary~\ref{th2}, we complete the proof.
\end{proof}

\begin{remarks}
 According to Corollary~\ref{thm1}, the partitions of $5$ in which each part $k$ has the multiplicity at most $v(k)$ are
$$5,\quad 4+1,\quad 3+2,\quad 2+2+1.$$
\begin{align*}
ped(5)
&=\binom{v(5)}{1}+\binom{v(4)}{1}\binom{v(1)}{1}+\binom{v(3)}{1}\binom{v(2)}{1}+\binom{v(2)}{2}\binom{v(1)}{1}\\
&=\binom{1}{1}+\binom{2}{1}\binom{1}{1}+\binom{1}{1}\binom{2}{1}+\binom{2}{2}\binom{1}{1}=6.
\end{align*}\end{remarks}

\begin{theorem}\label{th4}Let for any positive integer $n$ and $|q|<1$, $p_d(n)$ denote the number of partitions of an positive integer $n$ into distinct parts and defined by
$$\sum_{n=0}^{\infty} p_d(n)q^n=\dfrac{1}{(q;q^2)_\infty},\quad p_d(0)=1.$$
Then
\begin{equation}\label{3}
\sum_{n=0}^{\infty}p_d(n)q^n=\prod_{n=1}^{\infty}(1+q^n)^{v_2(4n-2)}.
\end{equation}
\end{theorem}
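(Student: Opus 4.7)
The plan is to mirror the structure of the proof of Theorem~\ref{th1}, since all the required tools are already in place. The starting point is the given generating function
\[
\sum_{n=0}^{\infty} p_d(n)q^n = \prod_{n=1}^{\infty}\frac{1}{1-q^{2n-1}},
\]
and the target is a product over $(1+q^n)^{v_2(4n-2)}$, so I will convert each odd-indexed geometric factor into an infinite product of $(1+\,\cdot\,)$ terms via identity \eqref{o2} and then reorganize.

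First I would record the trivial but crucial observation that $4n-2 = 2(2n-1)$ with $2n-1$ odd, hence $v_2(4n-2) = 1$ for every positive integer $n$. This reduces the claimed right-hand side to $\prod_{n=1}^{\infty}(1+q^n)$, so the theorem is the statement $\prod_{n\geq 1}(1-q^{2n-1})^{-1} = \prod_{n\geq 1}(1+q^n)$, i.e.\ Euler's classical distinct-parts/odd-parts identity, written in a $2$-adic guise.

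Next I would apply \eqref{o2} inside the product, giving
\[
\sum_{n=0}^{\infty} p_d(n)q^n
= \prod_{n=1}^{\infty}\prod_{k=0}^{\infty}\bigl(1+q^{2^k(2n-1)}\bigr).
\]
The key combinatorial fact is that every positive integer $m$ admits a unique factorization $m = 2^k(2n-1)$ with $k \ge 0$ and $n \ge 1$ (namely $k = v_2(m)$ and $2n-1 = m/2^{v_2(m)}$). Therefore the double product ranges over each $m \ge 1$ exactly once, and collapses to $\prod_{m=1}^{\infty}(1+q^m)$. Invoking the observation that $v_2(4n-2)=1$ rewrites this as $\prod_{n=1}^{\infty}(1+q^n)^{v_2(4n-2)}$, completing the proof.

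There is really no substantial obstacle here: the only thing to check carefully is the uniqueness of the decomposition $m = 2^k(2n-1)$, which is standard. Once that is in hand, the chain of equalities goes through exactly as in Theorem~\ref{th1} and Theorem~\ref{th3}, and the constant value $v_2(4n-2)=1$ makes the exponent bookkeeping trivial.
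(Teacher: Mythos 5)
Your proof is correct and follows essentially the same route as the paper: expand each factor $(1-q^{2n-1})^{-1}$ via \eqref{o2} and collapse the double product using the unique factorization $m=2^{k}(2n-1)$, a step the paper asserts without spelling out. Your additional observation that $v_2(4n-2)=v_2\bigl(2(2n-1)\bigr)=1$ for every $n\geq 1$ is accurate and worth recording: it shows the exponent in Theorem~\ref{th4} is identically $1$, so the statement is exactly Euler's distinct-parts/odd-parts identity written in $2$-adic notation.
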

\begin{proof}
The generating function of $p_d(n)$ can be rewritten as
\begin{align*}\sum_{n=0}^{\infty}p_d(n)q^n
&=\prod_{n=1}^{\infty}\dfrac{1}{1-q^{2n-1}}\notag\\
&=\prod_{n=1}^{\infty}\prod_{k=0}^{\infty}(1+q^{2^k\cdot (2n-1)})=\prod_{n=1}^{\infty}(1+q^n)^{v_2(4n-2)}.
\end{align*} 
\end{proof}

\begin{corollary}\label{thm2}
Let $n$ be a positive integer, then we have
\begin{equation*}
p_d(n)=\sum_{\substack{t_1+2t_2+...+nt_n=n\\ t_k\leq v_2(4k-2)}}^{}\binom{v_2(2)}{t_1}\binom{v_2(6)}{t_2}...\binom{v_2(4n-2)}{t_n}.
\end{equation*}
\end{corollary}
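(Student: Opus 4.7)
The plan is to mirror, step for step, the proof of Corollary~\ref{th2}, since the generating function identity in Theorem~\ref{th4} has exactly the same shape as the one in Theorem~\ref{th1}, only with the exponent $v(n)$ replaced by $v_2(4n-2)$. First I would start from equation \eqref{3} and expand each factor of the infinite product by the binomial theorem:
$$(1+q^n)^{v_2(4n-2)} = \sum_{j=0}^{v_2(4n-2)} \binom{v_2(4n-2)}{j} q^{jn}.$$
Substituting this into \eqref{3} gives
$$\sum_{n=0}^{\infty} p_d(n)q^n = \prod_{n=1}^{\infty}\left(\sum_{j=0}^{v_2(4n-2)} \binom{v_2(4n-2)}{j} q^{jn}\right).$$

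Next I would apply Cauchy multiplication of power series to expand the infinite product. Choosing, for each $n \geq 1$, an exponent $t_n \in \{0,1,\ldots,v_2(4n-2)\}$ contributes a term $q^{n t_n} \binom{v_2(4n-2)}{t_n}$, and the total $q$-degree collected is $\sum_{n \geq 1} n t_n$. Grouping by total degree yields
$$\sum_{n=0}^{\infty} p_d(n)q^n = \sum_{n=0}^{\infty} q^n \sum_{\substack{t_1 + 2t_2 + \cdots + nt_n = n \\ t_k \leq v_2(4k-2)}} \prod_{j=1}^{n}\binom{v_2(4j-2)}{t_j}.$$
Comparing the coefficient of $q^n$ on both sides then produces the claimed formula for $p_d(n)$.

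There is really no significant obstacle: the bound $t_k \leq v_2(4k-2)$ in the summation may either be imposed explicitly (since $\binom{v_2(4k-2)}{t_k}$ vanishes beyond this range) or omitted, the two versions being equivalent. The only small bookkeeping point is to note that for $k > n$ any nonzero $t_k$ would make $kt_k > n$, so these variables are forced to be $0$ and the product can be truncated at $j = n$, matching the form in the statement.
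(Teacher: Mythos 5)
Your proposal is correct and follows exactly the route the paper intends: the paper's own proof of Corollary~\ref{thm2} simply says to proceed as in Corollary~\ref{th2}, i.e., binomial expansion of each factor of \eqref{3}, Cauchy multiplication of the resulting power series, and comparison of the coefficients of $q^n$. Your added remarks about the vanishing of $\binom{v_2(4k-2)}{t_k}$ beyond the stated range and the truncation at $j=n$ are accurate bookkeeping consistent with the paper's argument.
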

\begin{proof}
Proceeding in the same way as in Corollary~\ref{th2}, we complete the proof.
\end{proof}

\begin{remarks}
 According to Corollary~\ref{thm2}, the partitions of $5$ in which each part $k$ has the multiplicity at most $v_2(4k-2)$ are
$$5,\quad 4+1,\quad 3+2.$$
\begin{align*}
p_d(5)
&=\binom{v_2(18)}{1}+\binom{v_2(14)}{1}\binom{v_2(2)}{1}+\binom{v_2(10)}{1}\binom{v_2(6)}{1}\\
&=\binom{1}{1}+\binom{1}{1}\binom{1}{1}+\binom{1}{1}\binom{1}{1}=3.
\end{align*}\end{remarks}

\begin{theorem}\label{th5}Let for any positive integer $n$ and $|q|<1$, $pod(n)$ denote the number of partitions of an positive integer $n$ into distinct odd parts where even parts may be repeated and defined by
$$\sum_{n=0}^{\infty} pod(n)q^n=\dfrac{(-q;q^2)_\infty}{(q^2;q^2)_\infty},\quad pod(0)=1.$$
Then
\begin{equation}\label{4}
\sum_{n=0}^{\infty}pod(n)q^n=\prod_{n=1}^{\infty}(1+q^n)^{v(n)},
\end{equation}
 where $$ v(n)=\left\{
 \begin{array}{cc}
v_2(n), \quad
  if~ n=2k, k=1,2,3..., \\
 v_2(4n-2), \quad if~ n=2k-1, k=1,2,3....
 \end{array}
 \right.$$
\end{theorem}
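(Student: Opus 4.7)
The plan is to follow the template of the proofs of Theorems~\ref{th1} and~\ref{th3}. First, I would rewrite the generating function of $pod(n)$ as
\begin{equation*}
\sum_{n=0}^{\infty} pod(n)q^n = \prod_{n=1}^{\infty}\frac{1+q^{2n-1}}{1-q^{2n}},
\end{equation*}
so that the numerator records distinct odd parts and the denominator records repeated even parts. Next, I would apply identity~\eqref{o1} with $q$ replaced by $q^{2n}$ to get
\begin{equation*}
\frac{1}{1-q^{2n}} = \prod_{k=0}^{\infty}(1+q^{2^{k+1}\cdot n}),
\end{equation*}
and substitute to obtain
\begin{equation*}
\sum_{n=0}^{\infty} pod(n)q^n = \prod_{n=1}^{\infty}(1+q^{2n-1})\prod_{n=1}^{\infty}\prod_{k=0}^{\infty}(1+q^{2^{k+1}\cdot n}).
\end{equation*}

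The heart of the proof is to match this expansion against $\prod_{n=1}^{\infty}(1+q^n)^{v(n)}$ by counting, for each positive integer $m$, the multiplicity of the factor $(1+q^m)$ on the right-hand side. If $m$ is odd, then $2^{k+1}n$ is necessarily even for all $k\geq 0$ and $n\geq 1$, so the double product contributes nothing and only the first product contributes, giving multiplicity $1$; this agrees with $v(m)=v_2(4m-2)=v_2(2(2m-1))=1$. If $m$ is even, write $m=2^j r$ with $r$ odd and $j=v_2(m)\geq 1$; the first product now contributes nothing, and the equation $2^{k+1}n=m$ has the unique positive integer solution $n=m/2^{k+1}$ precisely when $k\in\{0,1,\ldots,j-1\}$, yielding multiplicity exactly $j=v_2(m)=v(m)$.

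The main (mild) obstacle is the bookkeeping for the even case, namely verifying the bijection between the solutions $(n,k)$ of $2^{k+1}n=m$ and the index set $\{0,1,\ldots,v_2(m)-1\}$. Beyond this careful multiplicity count, the argument is a direct adaptation of the proofs of Theorems~\ref{th1} and~\ref{th3}, so no new tools are required.
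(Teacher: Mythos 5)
Your proposal is correct and follows essentially the same route as the paper: the same rewriting of the generating function and the same application of the identity $\tfrac{1}{1-q}=\prod_{k\ge 0}(1+q^{2^k})$ to the even-part factors. The only difference is that you spell out the multiplicity count for each factor $(1+q^m)$, which the paper leaves implicit, and your bookkeeping there is accurate.
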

\begin{proof}
The generating function of $pod(n)$ can be rewritten as
\begin{align*}\sum_{n=0}^{\infty}pod(n)q^n
&=\prod_{n=1}^{\infty}\dfrac{1+q^{2n-1}}{1-q^{2n}}\notag\\
&=\prod_{n=1}^{\infty}(1+q^{2n-1})\prod_{k=0}^{\infty}(1+q^{2^k\cdot 2n})=\prod_{n=1}^{\infty}(1+q^n)^{v(n)}.
\end{align*}
\end{proof}

\begin{corollary}\label{thm3}
Let $n$ be a positive integer, then we have
\begin{equation*}
pod(n)=\sum_{\substack{t_1+2t_2+...+nt_n=n\\ t_k\leq v(k)}}^{}\binom{v(1)}{t_1}\binom{v(2)}{t_2}...\binom{v(n)}{t_n},
\end{equation*} where $v(n)$ is defined as in equation \eqref{4} of Theorem~\ref{th5}.
\end{corollary}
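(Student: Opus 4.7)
The plan is to follow the template of Corollary~\ref{th2} verbatim, since Theorem~\ref{th5} provides exactly the same kind of product representation in terms of $(1+q^n)^{v(n)}$ as Theorem~\ref{th1}. First I would apply the finite binomial theorem to each factor on the right-hand side of~\eqref{4} to write
\begin{equation*}
\sum_{n=0}^{\infty}pod(n)q^n=\prod_{n=1}^{\infty}\left(\sum_{j=0}^{v(n)}\binom{v(n)}{j}q^{jn}\right),
\end{equation*}
where $v(n)$ is as defined in Theorem~\ref{th5}.

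Next I would perform the Cauchy multiplication of this infinite product of polynomials. The coefficient of $q^n$ on the right-hand side collects the contributions from all choices of exponents $t_1, t_2, \ldots, t_n$ (with $t_k$ indicating how many copies of $q^k$ are picked from the $k$th factor) satisfying $t_1 + 2t_2 + \cdots + nt_n = n$, yielding
\begin{equation*}
\sum_{n=0}^{\infty}pod(n)q^n=\sum_{n=0}^{\infty}q^n\sum_{t_1+2t_2+\cdots+nt_n=n}\prod_{j=1}^{n}\binom{v(j)}{t_j}.
\end{equation*}
Comparing coefficients of $q^n$ then gives the stated identity. The constraint $t_k\le v(k)$ in the indexing set is automatic, since $\binom{v(k)}{t_k}$ vanishes whenever $t_k>v(k)$, so only partitions respecting these multiplicity bounds contribute.

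No real obstacle is anticipated, since the argument is essentially a copy of Corollary~\ref{th2}. The only point worth verifying is that $v(n)$ in Theorem~\ref{th5} is always a nonnegative integer so that the binomial expansion terminates; this holds because $v_2(2k)\ge 1$ for even $n=2k$ and $v_2(4(2k-1)-2)=v_2(8k-6)\ge 1$ for odd $n=2k-1$, which also guarantees that all sums involved are genuinely finite and the formal rearrangements are justified.
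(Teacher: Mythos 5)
Your proposal is correct and matches the paper's proof exactly: the paper proves this corollary by the same binomial expansion, Cauchy multiplication, and comparison of coefficients of $q^n$ used in Corollary~\ref{th2}, which is precisely your argument. The added observation that $v(n)$ is a nonnegative integer (indeed $v(n)\ge 1$) is a harmless extra check.
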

\begin{proof}
Proceeding in the same way as in Corollary~\ref{th2}, we complete the proof.
\end{proof}

\begin{remarks}
According to Corollary~\ref{thm3}, the partitions of $5$ in which each part $k$ has the multiplicity at most $v(k)$ are
$$5,\quad 4+1,\quad 3+2.$$
\begin{align*}
pod(5)
&=\binom{v(5)}{1}+\binom{v(4)}{1}\binom{v(1)}{1}+\binom{v(3)}{1}\binom{v(2)}{1}\\
&=\binom{1}{1}+\binom{2}{1}\binom{1}{1}+\binom{1}{1}\binom{1}{1}=4.
\end{align*}\end{remarks}

\begin{theorem}\label{th6}Let for any positive integer $n$ and $|q|<1$, $p_e(n)$ denote the number of partitions of an positive integer $n$ into even parts with the generating function given by
$$\sum_{n=0}^{\infty} p_e(n)q^n=\dfrac{1}{(q^2;q^2)_\infty},\quad p_e(0)=1.$$
Then
\begin{equation}\label{5}
\sum_{n=0}^{\infty}p_e(n)q^n=\prod_{n=1}^{\infty}(1+q^n)^{v_2(n)}.
\end{equation}
\end{theorem}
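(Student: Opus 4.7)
The plan is to mirror the strategy used in the proofs of Theorems~\ref{th1}, \ref{th3}, \ref{th4}, \ref{th5}: start from the basic identity
\begin{equation*}
\dfrac{1}{1-q}=\prod_{k=0}^{\infty}(1+q^{2^k}), \quad |q|<1,
\end{equation*}
which already appears in the proof of Theorem~\ref{th1} as equation \eqref{o1}. Since the generating function for $p_e(n)$ involves only the factors $1/(1-q^{2n})$ (rather than a mix of odd and even parts), the argument here should actually be simpler than in the earlier theorems, because no extra factor of the form $(1+q^{\text{something}})$ needs to be carried along separately.

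The key steps I would carry out in order are: first, replace $q$ by $q^{2n}$ in the identity above to obtain
\begin{equation*}
\dfrac{1}{1-q^{2n}}=\prod_{k=0}^{\infty}(1+q^{2^k\cdot 2n}).
\end{equation*}
Second, substitute this into the generating function of $p_e(n)$ to get
\begin{equation*}
\sum_{n=0}^{\infty}p_e(n)q^n=\prod_{n=1}^{\infty}\dfrac{1}{1-q^{2n}}=\prod_{n=1}^{\infty}\prod_{k=0}^{\infty}(1+q^{2^{k+1}\cdot n}).
\end{equation*}
Third, reorganize this double product as a single product indexed by $m$, counting for each $m\geq 1$ the number of pairs $(n,k)$ with $n\geq 1$, $k\geq 0$ and $2^{k+1}n=m$.

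The main (and only non-routine) step is this last counting, so let me record what to expect. Writing $m=2^a b$ with $b$ odd and $a=v_2(m)$, the equation $2^{k+1}n=m$ forces $n=2^{a-k-1}b$, so $n$ is a positive integer precisely when $0\leq k\leq a-1$. Thus the factor $(1+q^m)$ appears with multiplicity exactly $a=v_2(m)$, yielding
\begin{equation*}
\sum_{n=0}^{\infty}p_e(n)q^n=\prod_{m=1}^{\infty}(1+q^m)^{v_2(m)},
\end{equation*}
which is \eqref{5}. Note that for odd $m$ we have $v_2(m)=0$, so those factors are trivially $1$; this is consistent with the fact that only even exponents $2^{k+1}n$ can appear on the left, and nothing is lost by extending the product to all $m\geq 1$. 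No obstruction beyond this bookkeeping is anticipated, and the pattern is exactly analogous to Theorem~\ref{th4}, where the parallel count produced $v_2(4n-2)$ in place of $v_2(n)$.
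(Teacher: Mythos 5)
Your proposal is correct and follows essentially the same route as the paper: rewrite each factor $1/(1-q^{2n})$ via the identity $1/(1-q)=\prod_{k\ge 0}(1+q^{2^k})$ and collect the resulting double product into $\prod_{m\ge 1}(1+q^m)^{v_2(m)}$. The only difference is that you make explicit the counting of pairs $(n,k)$ with $2^{k+1}n=m$, which the paper leaves implicit in its final equality.
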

\begin{proof}
The generating function of $p_e(n)$ can be rewritten as
\begin{align*}\sum_{n=0}^{\infty}p_e(n)q^n
&=\prod_{n=1}^{\infty}\dfrac{1}{1-q^{2n}}\notag\\
&=\prod_{n=1}^{\infty}\prod_{k=0}^{\infty}(1+q^{2^k\cdot (2n)})=\prod_{n=1}^{\infty}(1+q^n)^{v_2(n)}.
\end{align*} 
\end{proof}

\begin{corollary}\label{thm4}
Let $n$ be a positive integer, then we have
\begin{equation*}
p_e(n)=\sum_{\substack{t_1+2t_2+...+nt_n=n\\ t_k\leq v_2(k)}}^{}\binom{v_2(1)}{t_1}\binom{v_2(2)}{t_2}...\binom{v_2(n)}{t_n}.
\end{equation*}
\end{corollary}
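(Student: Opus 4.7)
The plan is to mimic exactly the argument used in Corollary~\ref{th2}, with the sequence $v(n)$ replaced by $v_2(n)$. Starting from Theorem~\ref{th6}, which gives the product representation
\[
\sum_{n=0}^{\infty} p_e(n)q^n = \prod_{n=1}^{\infty}(1+q^n)^{v_2(n)},
\]
I would first apply the binomial theorem to each factor on the right, noting that $v_2(n)$ is a nonnegative integer for $n\geq 1$, to obtain
\[
\sum_{n=0}^{\infty} p_e(n)q^n = \prod_{n=1}^{\infty}\left(\sum_{j=0}^{v_2(n)}\binom{v_2(n)}{j}q^{jn}\right).
\]

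Next, I would invoke Cauchy multiplication of power series to rewrite the infinite product as a single summation indexed by tuples $(t_1,t_2,\ldots,t_n,\ldots)$ of nonnegative integers with $t_k\leq v_2(k)$:
\[
\sum_{n=0}^{\infty}p_e(n)q^n = \sum_{n=0}^{\infty}q^n\sum_{t_1+2t_2+\cdots+nt_n=n}\prod_{j=1}^{n}\binom{v_2(j)}{t_j},
\]
where the binomial convention $\binom{v_2(j)}{t_j}=0$ for $t_j>v_2(j)$ automatically enforces the constraint $t_k\leq v_2(k)$. Comparing coefficients of $q^n$ on both sides yields the stated formula.

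There is essentially no obstacle here beyond bookkeeping, since the argument is identical in form to the proof of Corollary~\ref{th2}; the only conceptual sanity check is that $v_2(1)=0$ forces $t_1=0$ in every surviving summand, which is consistent with $p_e(n)$ counting only partitions into even parts (so the part $1$ never appears). Hence the short proof simply reads: \emph{Proceeding in the same way as in Corollary~\ref{th2} with $v(k)$ replaced by $v_2(k)$ and appealing to Theorem~\ref{th6}, we complete the proof.}
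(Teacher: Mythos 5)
Your proposal is correct and follows exactly the route the paper takes: the paper's proof of this corollary simply says to proceed as in Corollary~\ref{th2}, i.e.\ expand each factor of the product in Theorem~\ref{th6} by the binomial theorem, apply Cauchy multiplication, and compare coefficients of $q^n$. Your additional remark that $v_2(1)=0$ forces $t_1=0$, consistent with $p_e(n)$ counting partitions into even parts, is a nice sanity check but does not change the argument.
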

\begin{proof}
Proceeding in the same way as in Corollary~\ref{th2}, we complete the proof.
\end{proof}

\begin{remarks}
According to Corollary~\ref{thm4}, the partitions of $8$ in which each part $k$ has the multiplicity at most $v_2(k)$ are
$$8,\quad 6+2,\quad 4+4.$$
\begin{align*}
p_e(8)
&=\binom{v_2(8)}{1}+\binom{v_2(6)}{1}\binom{v_2(2)}{1}+\binom{v_2(4)}{2}\\
&=\binom{3}{1}+\binom{1}{1}\binom{1}{1}+\binom{2}{2}=5.
\end{align*}\end{remarks}

\section*{Acknowledgements} The first author acknowledges the financial support received from UGC, India through National Fellowship for Scheduled Castes  Students (NFSC) under grant Ref. no.: 211610029643.
 
 \section*{\bf Declarations}
 \noindent{\bf Conflict of interest.} The authors declare that there is no conflict of interest regarding the publication of
 this article.
 
 \noindent{\bf Human and animal rights.} The authors declare that there is no research involving human participants or
 animals in the contained of this paper.	
 
 \noindent{\bf Data availability statements.} Data sharing not applicable to this article as no data sets were generated or analysed during the current study.

\end{document}